\newtheorem{theorem}{Theorem}[section]
\newtheorem{lemma}[theorem]{Lemma}
\newtheorem{corollary}[theorem]{Corollary}
\numberwithin{equation}{section}
\begin{document}
\allowdisplaybreaks
\title{$C^{*}$-algebras isomorphically representable on $l^{p}$}
\author{March T.~Boedihardjo}
\address{Department of Mathematics, University of California, Los Angeles, CA 90095-1555}
\email{march@math.ucla.edu}
\keywords{$l^{p}$ space, $C^{*}$-algebra}
\subjclass[2010]{46H20}
\begin{abstract}
Let $p\in(1,\infty)\backslash\{2\}$. We show that every homomorphism from a $C^{*}$-algebra $\mathcal{A}$ into $B(l^{p}(J))$ satisfies a compactness property where $J$ is any set. As a consequence, we show that a $C^{*}$-algebra $\mathcal{A}$ is isomorphic to a subalgebra of $B(l^{p}(J))$, for some set $J$, if and only if $\mathcal{A}$ is residually finite dimensional.
\end{abstract}
\maketitle
\section{Introduction}
For $1\leq p<\infty$ and a set $J$, let $l^{p}(J)$ be the space $\{f\colon J\to\mathbb{C}\colon\sum_{j\in J}|f(j)|^{p}<\infty\}$ with norm $\|f\|=(\sum_{j\in J}|f(j)|^{p})^{\frac{1}{p}}$. Two Banach algebras $\mathcal{A}_{1}$ and $\mathcal{A}_{2}$ are {\it isomorphic} if there exist a bijective homomorphism $\phi\colon\mathcal{A}_{1}\to\mathcal{A}_{2}$ and $C>0$ such that
\[\frac{1}{C}\|a\|\leq\|\phi(a)\|\leq C\|a\|,\]
for all $a\in\mathcal{A}_{1}$. The algebras $\mathcal{A}_{1}$ and $\mathcal{A}_{2}$ are {\it isometrically isomorphic} if moreover, $\phi$ can be chosen so that $\|\phi(a)\|=\|a\|$ for all $a\in\mathcal{A}_{1}$.

Gardella and Thiel \cite{Gardella} showed that for $p\in[1,\infty)\backslash\{2\}$, a $C^{*}$-algebra $\mathcal{A}$ is isometrically isomorphic to a subalgebra of $B(l^{p}(J))$, for some set $J$, if and only if $\mathcal{A}$ is commutative. So it is natural to consider the question whether this result holds if we relax the condition of isometrically isomorphic to isomorphic. In this paper, we show that for $p\in(1,\infty)\backslash\{2\}$, a $C^{*}$-algebra $\mathcal{A}$ is isomorphic to a subalgebra of $B(l^{p}(J))$, for some set $J$, if and only if $\mathcal{A}$ is residually finite dimensional (Corollary \ref{lprfd}). We prove this by showing that every homomorphism from a $C^{*}$-algebra $\mathcal{A}$ into $B(l^{p}(J))$ satisfies a compactness property (Theorem \ref{main}).

The proofs of the main results Theorem \ref{main} and Corollary \ref{lprfd} in this paper are quite different from the proof of Gardella-Thiel's result. Lamperti's characterization \cite{Lamperti} of isometries on $L^{p}$, for $p\neq 2$, plays a crucial role in the proof of Gardella-Thiel's result, while uniform convexity of $l^p$, for $1<p<\infty$, and an argument in probability that imitates the proof of Khintchine's inequality \cite[Theorem 2.b.3]{Lindenstrauss}, for $p=1$, are used in the proof of Theorem \ref{main}.
\section{Main results and proofs}
Throughout this paper, the scalar field is $\mathbb{C}$; for algebras $\mathcal{A}_{1}$ and $\mathcal{A}_{2}$, a {\it homomorphism} $\phi\colon\mathcal{A}_{1}\to\mathcal{A}_{2}$ is a bounded linear map such that $\phi(a_{1}a_{2})=\phi(a_{1})\phi(a_{2})$ for all $a_{1},a_{2}\in\mathcal{A}$; for an element $a$ of a $C^{*}$-algebra, $|a|=\sqrt{a^{*}a}$; the algebra of bounded linear operators on a Banach space $\mathcal{X}$ is denoted by $B(\mathcal{X})$ and the dual of $\mathcal{X}$ is denoted by $\mathcal{X}^{*}$; for $1\leq p\leq\infty$, the $l^{p}$ direct sum of Banach spaces $\mathcal{X}_{\alpha}$, for $\alpha\in\Lambda$, is denoted by $(\oplus_{\alpha\in\Lambda}\mathcal{X}_{\alpha})_{l^{p}}$. Two Banach spaces $\mathcal{X}_{1}$ and $\mathcal{X}_{2}$ are {\it isomorphic} if there is an invertible operator $S\colon\mathcal{X}_{1}\to\mathcal{X}_{2}$. A $C^{*}$-algebra $\mathcal{A}$ is {\it residually finite dimensional} if for every $a\in\mathcal{A}$, there is a $*$-representation $\phi$ of $\mathcal{A}$ on a finite dimensional space such that $\phi(a)\neq 0$.

\begin{theorem}\label{main}
Let $p\in(1,\infty)\backslash\{2\}$. Let $J$ be a set. Let $\mathcal{A}$ be a $C^{*}$-algebra. Let $\phi\colon\mathcal{A}\to B(l^{p}(J))$ be a homomorphism. Then
\begin{enumerate}[(i)]
\item the norm closure of $\{\phi(a)x\colon a\in\mathcal{A},\,\|a\|\leq 1\}$ in $l^{p}(J)$ is norm compact for every $x\in l^{p}(J)$; and
\item $\mathcal{A}/\text{ker }\phi$ is a residually finite dimensional $C^{*}$-algebra.
\end{enumerate}
\end{theorem}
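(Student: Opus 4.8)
\emph{Overall plan.} The plan is to treat part (i) as the analytic core and to derive part (ii) from it by a soft, Peter--Weyl-type argument. After adjoining a unit I may assume $\mathcal A$ and $\phi$ are unital. Since the Russo--Dye theorem identifies the closed unit ball of $\mathcal A$ with the closed convex hull of its unitary group $U(\mathcal A)$, and since the closed convex hull of a norm-compact set in a Banach space is again norm-compact, part (i) is equivalent to the assertion that the \emph{unitary orbit} $\{\phi(u)x:u\in U(\mathcal A)\}$ is relatively norm-compact for every $x\in l^p(J)$. I would prove this one generator at a time and then assemble.

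\emph{The heart of (i): one generator.} Fix a unitary $u\in\mathcal A$ and put $T=\phi(u)$. Because $u^{n}$ is unitary for every $n\in\mathbb Z$, I get $\sup_{n\in\mathbb Z}\|T^{n}\|=\sup_{n}\|\phi(u^{n})\|\le\|\phi\|$, so $T$ is doubly power bounded. As $l^{p}(J)$ is reflexive, the mean ergodic theorem applied to the power-bounded $\bar\lambda T$ furnishes, for each $\lambda\in\mathbb T$, the spectral projection
\[P_{\lambda}x=\lim_{N\to\infty}\frac1{N}\sum_{n=0}^{N-1}\bar\lambda^{\,n}T^{\,n}x,\]
a bounded projection onto the eigenspace $\ker(T-\lambda)$ with $\|P_{\lambda}\|\le\|\phi\|$ and $\phi(g)P_{\lambda}=g(\lambda)P_{\lambda}$ for all $g\in C(\mathbb T)$. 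For finitely many distinct $\lambda_{1},\dots,\lambda_{n}$ and scalars $|c_{j}|\le1$ one interpolates a $g\in C(\mathbb T)$ with $\|g\|_{\infty}\le1$ and $g(\lambda_{j})=c_{j}$; applying $\phi(g)$ to $\sum_{j}P_{\lambda_{j}}x$ yields $\|\sum_{j}c_{j}P_{\lambda_{j}}x\|\le\|\phi\|\,\|\sum_{j}P_{\lambda_{j}}x\|$. Averaging over random signs $c_{j}=\pm1$ and invoking the $l^{p}$ Khintchine inequalities (the probabilistic estimate imitating \cite[Theorem~2.b.3]{Lindenstrauss}) converts this into an $l^{p}$-type orthogonality bound for the spectral pieces $\{P_{\lambda}x\}$ controlled by $\|\phi\|\,\|x\|$, together with the reverse square-function control that makes the series $\sum_{\lambda}P_{\lambda}x$ unconditionally convergent.

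\emph{The main obstacle: completeness (no continuous spectrum).} What remains, and this is where $p\neq2$ is indispensable, is to show the eigenprojections are \emph{complete}, i.e.\ $\sum_{\lambda}P_{\lambda}x=x$, equivalently that the functional calculus $g\mapsto\phi(g)$ carries no continuous part. This is genuinely false at $p=2$: the bilateral shift on $l^{2}(\mathbb Z)$ is doubly power bounded and invertible yet has an orbit that is not relatively compact, so \emph{no} argument using only reflexivity, uniform convexity, or power-boundedness, all of which hold equally at $p=2$, can succeed. The obstacle must be defeated by a quantitative use of the geometry of $l^{p}$ for $p\neq2$. Writing $x_{c}=x-\sum_{\lambda}P_{\lambda}x$ for the putative continuous part, I would cover $\mathbb T$ by $N$ equal arcs, approximate the corresponding band projections by continuous bumps $g_{1},\dots,g_{N}$, and compare two evaluations of $\sum_{j}\phi(g_{j})x_{c}$: the random-sign average is at most $\|\phi\|\,\|x_{c}\|$ because $\|\sum_{j}\pm g_{j}\|_{\infty}\approx1$, whereas the strict Clarkson inequality, valid only for $p\neq2$ and degenerating to the parallelogram law at $p=2$, forces the total band mass $\sum_{j}\|\phi(g_{j})x_{c}\|^{p}$ to tend to $0$ as $N\to\infty$ in the absence of eigenvectors; since the bands reconstruct $x_{c}$, this gives $x_{c}=0$. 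Granting completeness together with the orthogonality above, $\{\phi(g)x:\|g\|_{\infty}\le1\}$ lies within any $\varepsilon$ of the finite-dimensional span of finitely many $P_{\lambda_{j}}x$, hence is relatively compact, proving (i) for one generator. I expect this completeness step to be by far the hardest point of the whole argument.

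\emph{From one generator to the whole algebra, and (ii).} Diagonalizability of every self-adjoint (equivalently unitary) element, with its $l^{p}$-orthogonal spectral pieces, lets me build finite-dimensional $\phi(\mathcal A)$-invariant subspaces whose closed linear span is all of $l^{p}(J)$, upgrading the one-generator statement to relative compactness of the full unitary-group orbit and completing (i). For (ii), observe that (i) makes \emph{every} vector almost periodic for the bounded representation $U(\mathcal A)\to B(l^{p}(J))$, $u\mapsto\phi(u)$. By the Jacobs--de Leeuw--Glicksberg decomposition, $l^{p}(J)$ is then the closed span of finite-dimensional invariant subspaces $F_{\alpha}$; since each $F_{\alpha}$ is invariant under $\overline{\mathrm{span}}\,U(\mathcal A)=\mathcal A$, compression gives finite-dimensional homomorphisms $\pi_{\alpha}\colon\mathcal A/\ker\phi\to B(F_{\alpha})$. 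Each $\pi_{\alpha}$ sends unitaries to elements of the bounded matrix group $\pi_{\alpha}(U(\mathcal A))$; averaging an inner product over the closure of that group makes those operators unitary, so $\pi_{\alpha}$ is similar to a finite-dimensional $*$-representation. If $b+\ker\phi\neq0$ then $\phi(b)\neq0$, hence $\phi(b)$ is nonzero on some $F_{\alpha}$ and $\pi_{\alpha}(b)\neq0$; thus the $\{\pi_{\alpha}\}$ are finite-dimensional $*$-representations separating the points of $\mathcal A/\ker\phi$, which is therefore residually finite dimensional.
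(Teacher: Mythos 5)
Your proposal correctly reduces (i) to compactness of the unitary orbit and correctly locates where $p\neq 2$ must enter, but the step you yourself flag as the crux --- killing the continuous part $x_{c}$ of a single operator $\phi(u)$ --- is not proved, and the sketch you give for it cannot work as stated. The only geometric property of $l^{p}$ your argument names is Clarkson's inequality/uniform convexity, which $l^{2}$ enjoys equally (your own bilateral-shift example shows the statement is false at $p=2$, so no argument whose inputs all hold at $p=2$ can close this gap); saying the inequality is ``strict'' for $p\neq2$ does not identify a mechanism. Concretely, you never explain why the band masses $\sum_{j}\|\phi(g_{j})x_{c}\|^{p}$ should tend to $0$, nor why small individual band masses would force $x_{c}=0$ when the number of bands grows (the reconstruction $\sum_{j}\phi(g_{j})x_{c}=x_{c}$ is perfectly compatible with every band being small). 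The paper's actual mechanism is different and never appears in your sketch: a weakly null sequence in $l^{p}(J)$ has an almost disjointly supported subsequence, and disjointly supported vectors of bounded norm satisfy $\|\sum_{k=1}^{n}\pm z_{k}\|\leq Cn^{1/p}$; against this, a random-sign/Khintchine argument with a \emph{positive} functional $\omega=y_{0}^{*}(\phi(\cdot)x_{0})$ (Lemmas \ref{pdreverse}, \ref{holder}, \ref{premain1}) forces a lower bound of order $n^{1/2}$, and the exponents clash precisely when $p>2$. For $1<p<2$ the clash goes the wrong way and the whole argument must be run in the dual (Lemma \ref{premain2}) --- a case distinction your exponent-blind sketch never confronts. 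You are also missing the construction of a positive functional that controls the norm (the paper's Lemma \ref{omega}, via a maximizing sequence of unitaries, Russo--Dye, and uniform convexity); without it, vanishing of spectral/functional data does not translate back into $\|\phi(a)x_{0}\|\to 0$.

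The second gap is the assembly step, which is circular. Knowing that each single $\phi(u)$ is diagonalizable (equivalently, that each cyclic orbit $\{\phi(u^{n})x\}_{n\in\mathbb{Z}}$ is precompact) does not produce finite-dimensional $\phi(\mathcal{A})$-invariant subspaces: eigenspaces of a single $\phi(u)$ may be infinite dimensional, and the spectral decompositions of non-commuting unitaries are incompatible. The tool that would manufacture joint finite-dimensional invariant subspaces --- Jacobs--de Leeuw--Glicksberg/Peter--Weyl for the group $\mathcal{U}(\mathcal{A})$ --- requires precompactness of the orbit of the \emph{whole} unitary group, which is exactly part (i), the statement being proved. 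The paper avoids this trap by never arguing generator-by-generator: it proves (i) directly for an arbitrary sequence in the unit ball (reflexivity gives a weakly convergent subsequence, then Lemmas \ref{premain1}/\ref{premain2} and \ref{omega} upgrade weak to norm convergence), and only afterwards invokes compact-representation theory, on a GNS Hilbert space via Lemma \ref{cdecomp}, to obtain (ii). Your derivation of (ii) \emph{from the full statement of} (i) is sound in outline and is a legitimate alternative to the paper's GNS route, but it cannot be used inside the proof of (i) itself.
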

\begin{corollary}\label{lprfd}
Let $p\in(1,\infty)\backslash\{2\}$. A $C^{*}$-algebra $\mathcal{A}$ is isomorphic to a subalgebra of $B(l^{p}(J))$, for some set $J$, if and only if $\mathcal{A}$ is residually finite dimensional.
\end{corollary}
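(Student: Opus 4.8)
The forward implication is essentially immediate from Theorem \ref{main}. Suppose $\mathcal{A}$ is isomorphic to a subalgebra of $B(l^{p}(J))$; unwinding the definition of isomorphism from the introduction, there are a homomorphism $\phi\colon\mathcal{A}\to B(l^{p}(J))$ and a constant $C>0$ with $\frac{1}{C}\|a\|\leq\|\phi(a)\|\leq C\|a\|$ for all $a$, so $\phi$ is injective and $\ker\phi=\{0\}$. Theorem \ref{main}(ii) then says that $\mathcal{A}/\ker\phi$ is residually finite dimensional; as $\ker\phi=\{0\}$ this is just $\mathcal{A}$, which is what we want. So the entire content of the corollary beyond Theorem \ref{main} is the converse.

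For the converse I would argue by an explicit construction. Since $\mathcal{A}$ is residually finite dimensional, the family of all finite dimensional $*$-representations separates the points of $\mathcal{A}$, so there is a faithful $*$-representation $\pi=\bigoplus_{i}\pi_{i}$ with each $\pi_{i}\colon\mathcal{A}\to B(H_{i})$ finite dimensional; being a faithful $*$-homomorphism of $C^{*}$-algebras, $\pi$ is isometric, i.e.\ $\|a\|=\sup_{i}\|\pi_{i}(a)\|$ for every $a\in\mathcal{A}$. The plan is to replace each Hilbert space action $\pi_{i}$ by an action on a finite dimensional $l^{p}$ space whose distortion is bounded \emph{independently of $i$}, and then to take an $l^{p}$-direct sum: if $\rho_{i}\colon B(H_{i})\to B(Y_{i})$ are homomorphisms into $B(Y_{i})$ for some $l^{p}$ spaces $Y_{i}$ with $\frac{1}{C_{p}}\|T\|\leq\|\rho_{i}(T)\|\leq C_{p}\|T\|$ for all $T\in B(H_{i})$ and a single constant $C_{p}$, then $\Phi:=\bigoplus_{i}(\rho_{i}\circ\pi_{i})$ maps $\mathcal{A}$ into $B\big((\oplus_{i}Y_{i})_{l^{p}}\big)=B(l^{p}(J))$. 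Because $\Phi(a)$ is block diagonal its norm equals $\sup_{i}\|\rho_{i}(\pi_{i}(a))\|$, and these bounds combine with $\|a\|=\sup_{i}\|\pi_{i}(a)\|$ to give $\frac{1}{C_{p}}\|a\|\leq\|\Phi(a)\|\leq C_{p}\|a\|$; thus $\Phi$ is an isomorphism onto a subalgebra of $B(l^{p}(J))$.

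The main obstacle, and the only substantial point, is the construction of the $\rho_{i}$ with a distortion constant that does not depend on $\dim H_{i}$. Writing $B(H_{i})=B(l^{2}_{n})$, the naive choice---letting $B(l^{2}_{n})$ act spatially on $l^{p}_{n}$---fails, since that action distorts the $C^{*}$-norm by a factor of order $n^{|1/2-1/p|}$. Instead I would use that $l^{2}_{n}$ sits inside $l^{p}_{N}$, for $N=N(n)$ large, as a subspace of bounded distortion which is moreover complemented by a projection of bounded norm, \emph{with both bounds independent of $n$}. This is the probabilistic input: independent standard Gaussians $g_{1},\dots,g_{n}$ span a copy of $l^{2}_{n}$ in $L^{p}$ on which the $L^{p}$ and $L^{2}$ norms agree up to the constant $\gamma_{p}=\|g_{1}\|_{p}$, and the orthogonal projection $Pf=\sum_{k}\langle f,g_{k}\rangle g_{k}$ onto their span is bounded on $L^{p}$ for $1<p<\infty$ (for $p\geq 2$ directly, since $\|Pf\|_{p}=\gamma_{p}\|Pf\|_{2}\leq\gamma_{p}\|f\|_{2}\leq\gamma_{p}\|f\|_{p}$, and for $1<p<2$ by duality as $P$ is self-adjoint); passing to a sufficiently fine finite partition realizes this subspace and projection almost isometrically inside $l^{p}_{N}$. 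Fixing such an embedding $T\colon l^{2}_{n}\to l^{p}_{N}$ and a bounded projection $Q$ onto $T(l^{2}_{n})$, I would set $\rho(a):=T\,a\,T^{-1}Q$. One checks that $\rho$ is multiplicative because $Q$ acts as the identity on $T(l^{2}_{n})$, that $\|\rho(a)\|\leq\|T\|\,\|T^{-1}|_{T(l^{2}_{n})}\|\,\|Q\|\,\|a\|$, and that $\rho(a)$ restricted to $T(l^{2}_{n})$ equals $TaT^{-1}$, which yields the lower bound; all constants depend only on $p$. Feeding these $\rho_{i}$ into the assembly of the previous paragraph completes the proof.
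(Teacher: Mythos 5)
Your proposal is correct, and its first half coincides with the paper's proof: the forward direction is the same appeal to Theorem \ref{main}(ii) with $\ker\phi=\{0\}$, and for the converse both you and the paper take a separating family $(\pi_{i})$ of finite dimensional $*$-representations (so that $\|a\|=\sup_{i}\|\pi_{i}(a)\|$), form the block diagonal homomorphism on the $l^{p}$-direct sum, and use that a block diagonal operator on an $l^{p}$-sum has norm $\sup_{i}$ of the block norms. Where you genuinely diverge is the passage from the $l^{p}$-sum of Hilbert spaces to an honest $l^{p}(J)$: the paper simply cites Pe{\l}czy\'nski's classical theorem that $(\oplus_{\alpha}\mathcal{H}_{\alpha})_{l^{p}}$ is isomorphic to $l^{p}(J)$ and conjugates the whole block diagonal representation by a single global isomorphism $S$, i.e.\ $a\mapsto S\phi(a)S^{-1}$, whereas you transport each block $B(H_{i})$ separately into $B(l^{p}_{N_{i}})$ by the compressed conjugation $\rho_{i}(a)=T_{i}aT_{i}^{-1}Q_{i}$, built from the Gaussian embedding of $l^{2}_{n}$ into $L^{p}$ and the uniform boundedness of the orthogonal projection onto the Gaussian span. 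In effect you inline the finite dimensional heart of Pe{\l}czy\'nski's theorem: your route is self-contained, never needs $T_{i}$ to be surjective (the projection $Q_{i}$, and the resulting non-unitality of $\rho_{i}$, replace the decomposition method that an actual isomorphism of the ambient spaces would require), and handles uncountable index sets with no extra discussion; the paper's route is much shorter but leans entirely on the cited isomorphism. Two points in your construction should be tightened, though both are routine: since the scalar field is $\mathbb{C}$ you must use complex Gaussians --- for real $g_{k}$ and complex coefficients $a_{k}$ the quantity $\|\sum_{k}a_{k}g_{k}\|_{p}$ is \emph{not} proportional to $(\sum_{k}|a_{k}|^{2})^{1/2}$, as the real and imaginary parts of the sum need not be independent --- and the phrase ``passing to a sufficiently fine finite partition'' conceals a standard small-perturbation argument (approximate the finitely many Gaussians by simple functions in both $L^{p}$ and $L^{q}$, then correct the resulting almost-projection to a genuine projection by inverting its restriction to the perturbed span), which should be stated since it is where the almost-isometric constants and the bound on $\|Q_{i}\|$ actually come from.
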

Theorem \ref{main} and Corollary \ref{lprfd} will be proved at the end of this section after a series of lemmas are proved. Theorem \ref{main} has an easier proof when $\phi$ is contractive. Indeed, if $\phi\colon\mathcal{A}\to B(l^{p}(J))$ is a contractive homomorphism, then the range of $\phi$ is in the algebra of diagonal operators on $l^{p}(J)$ by \cite[Proposition 2.12]{Phillips} (or by \cite[Lemma 5.2]{Gardella} when $J$ is countable). Thus, $\{\phi(a)x\colon a\in\mathcal{A},\,\|a\|\leq 1\}$ is norm relatively compact, for every $x\in l^{p}(J)$, and $\mathcal{A}/\text{ker }\phi$ is commutative.

It is not known if Theorem \ref{main} and Corollary \ref{lprfd} hold for $p=1$. However, throughout their proofs, we use, in an essential way, the assumption that $p$ is in the reflexive range. For example, in the proof of Theorem \ref{main}(i), we use the fact that every bounded sequence in $l^{p}(J)$ has a weakly convergent subsequence. In the proof of Corollary \ref{lprfd}, we use a classical result of Pe{\l}czy\'nski that the $l^{p}$ direct sum of finite dimensional Hilbert spaces is isomorphic to $l^{p}(J)$ for some set $J$. This result of Pe{\l}czy\'nski holds only when $p$ is in the reflexive range.

The structure of the proof of Theorem \ref{main}(i) goes as follows: If the closure of $\{\phi(a)x_{0}\colon a\in\mathcal{A},\,\|a\|\leq 1\}$ is not compact for some $x_{0}\in l^{p}(J)$, then we can find a bounded sequence in $(b_{k})_{k\in\mathbb{N}}$ in $\mathcal{A}$ such that $\phi(b_{k})x_{0}\to 0$ weakly, as $k\to\infty$, and $\inf_{k\in\mathbb{N}}\|\phi(b_{k})x_{0}\|>0$. Assume that $p>2$. In Lemma \ref{premain1}, we show that $\phi(b_{k})\to 0$ weakly implies that $\omega(b_{k}^{*}b_{k})\to 0$ for all positive linear functional $\omega:\mathcal{A}\to\mathbb{C}$ of the form $\omega(a)=y_{0}^{*}(\phi(a)x_{0})$. This is proved by considering $\sum_{k=1}^{n}\delta_{k}b_{k}$ for random $\delta_{1},\ldots,\delta_{n}$ in $\{-1,1\}$ and by exploiting $p>2$. Lemma \ref{omega} says that when $y_{0}^{*}\in(l^{p}(J))^{*}$ is suitably chosen, $\omega(b_{k}^{*}b_{k})\to 0$ implies that $\|\phi(b_{k})x_{0}\|\to 0$, which contradicts with $\inf_{k\in\mathbb{N}}\|\phi(b_{k})x\|>0$. This is proved by using uniform convexity of $l^{p}(J)$.

Theorem \ref{main}(ii) follows from Theorem \ref{main}(i) by using a GNS type construction and a classical result about compact unitary representations of groups on Hilbert spaces.

The following two lemmas are needed for the proof of Lemma \ref{premain1}.
\begin{lemma}\label{pdreverse}
Let $\mathcal{A}$ be a unital $C^{*}$-algebra. Let $a\in\mathcal{A}$. Then there exists a sequence $(c_{n})_{n\in\mathbb{N}}$ in $\mathcal{A}$ such that $\|c_{n}\|\leq 1$ for all $n\in\mathbb{N}$ and $|a|=\lim_{n\to\infty}c_{n}a$.
\end{lemma}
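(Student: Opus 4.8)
The plan is to build the sequence $(c_n)$ directly from the continuous functional calculus, guided by the polar decomposition of $a$. In the enveloping von Neumann algebra $\mathcal{A}^{**}$ (or in $B(H)$ via a faithful representation) one may write $a=v|a|$ with $v$ a partial isometry, so that $|a|=v^{*}a$ and $\|v^{*}\|\leq 1$; the difficulty is that $v^{*}$ need not lie in $\mathcal{A}$. I would therefore replace $v^{*}$ by a norm-$\leq 1$ element of $\mathcal{A}$ that, in the limit, acts like $v^{*}$ on the range of $a$. Since $\mathcal{A}$ is unital and $a^{*}a+\frac1n\mathbf{1}$ is positive with spectrum contained in $[\frac1n,\infty)$, it is invertible in $\mathcal{A}$, and I set
\[
c_{n}=\left(a^{*}a+\tfrac1n\mathbf{1}\right)^{-1/2}a^{*}.
\]

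For the norm bound I would compute $c_{n}c_{n}^{*}=(a^{*}a+\frac1n)^{-1/2}(a^{*}a)(a^{*}a+\frac1n)^{-1/2}=(a^{*}a)(a^{*}a+\frac1n)^{-1}$, using that $a^{*}a$ commutes with $(a^{*}a+\frac1n)^{-1/2}$. By the functional calculus applied to $a^{*}a\geq 0$ this element corresponds to the function $t\mapsto t/(t+\frac1n)$, which takes values in $[0,1)$, so $\|c_{n}\|^{2}=\|c_{n}c_{n}^{*}\|\leq 1$. For the convergence I would express everything through $b:=|a|=(a^{*}a)^{1/2}$, whose spectrum lies in $[0,\|a\|]$, obtaining $c_{n}a=(a^{*}a+\frac1n)^{-1/2}(a^{*}a)=g_{n}(b)$ with $g_{n}(t)=t^{2}(t^{2}+\frac1n)^{-1/2}$. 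The key estimate is that $g_{n}\to\mathrm{id}$ \emph{uniformly} on $[0,\infty)$: indeed $|g_{n}(t)-t|=t\big(\sqrt{t^{2}+\frac1n}-t\big)(t^{2}+\frac1n)^{-1/2}\leq\frac{1}{\sqrt{n}}$, since $\sqrt{t^{2}+\frac1n}-t\leq\frac{1}{\sqrt{n}}$ and $t(t^{2}+\frac1n)^{-1/2}\leq 1$. Because $f\mapsto f(b)$ is isometric for the sup-norm on $\mathrm{spec}(b)$, this gives $\|c_{n}a-|a|\,\|=\sup_{t\in\mathrm{spec}(b)}|g_{n}(t)-t|\to 0$, which is the assertion.

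The construction and its two verifications are short; the genuine content lies in choosing the right approximant, and the conceptual obstacle is precisely the reason one must pass to a limit rather than exhibit a single $c$ with $|a|=ca$: the partial isometry $v$ of the polar decomposition generally fails to belong to $\mathcal{A}$. The regularization $(a^{*}a+\frac1n\mathbf{1})^{-1/2}$ is exactly what circumvents this obstruction while simultaneously keeping the norm of $c_{n}$ bounded by $1$, so I expect the only point requiring care to be confirming both properties hold for one and the same choice.
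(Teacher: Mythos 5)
Your proof is correct and takes essentially the same approach as the paper: both construct $c_{n}=f_{n}(a^{*}a)a^{*}$ where $f_{n}$ is a bounded regularization of $x^{-1/2}$, verify $\|c_{n}\|\leq 1$ by checking $0\leq xf_{n}(x)^{2}\leq 1$, and obtain convergence from the uniform estimate $|xf_{n}(x)-\sqrt{x}|\leq\frac{1}{\sqrt{n}}$. The only difference is cosmetic: you regularize with $(x+\frac{1}{n})^{-1/2}$ while the paper truncates $x^{-1/2}$ piecewise near $0$ (which forces its preliminary normalization $\|a\|\leq 1$, a step your formula avoids).
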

\begin{proof}
Without loss of generality, we may assume that $\|a\|\leq 1$. For $n\in\mathbb{N}$, define $g_{n}\in C[0,1]$ by
\[g_{n}(x)=\begin{cases}\frac{1}{\sqrt{x}},&\frac{1}{n}\leq x\leq 1\\n\sqrt{n}x,&0\leq x\leq\frac{1}{n}\end{cases}.\]
Take $c_{n}=g_{n}(a^{*}a)a^{*}$. Then $c_{n}c_{n}^{*}=g_{n}(a^{*}a)a^{*}ag_{n}(a^{*}a)$. Note that
\[xg_{n}(x)^{2}=\begin{cases}1,&\frac{1}{n}\leq x\leq 1\\n^{3}x^{3},&0\leq x\leq\frac{1}{n}\end{cases}.\]
Thus, $0\leq xg_{n}(x)^{2}\leq 1$ for all $x\in[0,1]$ and so $0\leq c_{n}c_{n}^{*}\leq 1$. Hence $\|c_{n}\|\leq 1$.

We have
\[xg_{n}(x)=\begin{cases}\sqrt{x},&\frac{1}{n}\leq x\leq 1\\n\sqrt{n}x^{2},&0\leq x\leq\frac{1}{n}\end{cases}\]
and so $|xg_{n}(x)-\sqrt{x}|\leq\frac{1}{\sqrt{n}}$ for all $x\in[0,1]$. Since $c_{n}a=g_{n}(a^{*}a)a^{*}a$, it follows that $\|c_{n}a-\sqrt{a^{*}a}\|\leq\frac{1}{\sqrt{n}}$. Thus, the result follows.
\end{proof}
\begin{lemma}\label{holder}
Let $\mathcal{A}$ be a unital $C^{*}$-algebra. Let $\omega$ be a positive linear functional on $\mathcal{A}$. Let $a\in\mathcal{A}$. If $a\geq 0$ then
\[\omega(a^{2})\leq\omega(a)^{\frac{2}{3}}\omega(a^{4})^{\frac{1}{3}}.\]
\end{lemma}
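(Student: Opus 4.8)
The key observation is that the exponents are arranged for interpolation: since $2=\tfrac{2}{3}\cdot 1+\tfrac{1}{3}\cdot 4$, the asserted bound is precisely the statement that the moment function $s\mapsto\omega(a^{s})$ is logarithmically convex, read off at the three points $1,2,4$. Rather than establishing full log-convexity, the plan is to extract exactly the inequality I need from two applications of the Cauchy--Schwarz inequality for the positive functional $\omega$, namely $|\omega(y^{*}x)|^{2}\leq\omega(x^{*}x)\,\omega(y^{*}y)$ for all $x,y\in\mathcal{A}$. This holds because $(x,y)\mapsto\omega(y^{*}x)$ is a positive semidefinite sesquilinear form, and $\omega$ is automatically bounded on a unital $C^{*}$-algebra.

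Since $a\geq 0$, the continuous functional calculus furnishes the positive self-adjoint elements $a^{1/2}$ and $a^{3/2}$. First I would apply Cauchy--Schwarz with $x=a^{3/2}$ and $y=a^{1/2}$: because $y^{*}x=a^{2}$, $x^{*}x=a^{3}$ and $y^{*}y=a$, this yields $\omega(a^{2})^{2}\leq\omega(a)\,\omega(a^{3})$. Next I would apply it with $x=a^{2}$ and $y=a$, which involves only integer powers and gives $\omega(a^{3})^{2}\leq\omega(a^{2})\,\omega(a^{4})$.

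Combining the two steps is then routine. From the second inequality, $\omega(a^{3})\leq\omega(a^{2})^{1/2}\omega(a^{4})^{1/2}$; substituting into the first gives $\omega(a^{2})^{2}\leq\omega(a)\,\omega(a^{2})^{1/2}\omega(a^{4})^{1/2}$, hence $\omega(a^{2})^{3/2}\leq\omega(a)\,\omega(a^{4})^{1/2}$, and raising to the power $2/3$ produces the claim. The one point needing care is the degenerate case $\omega(a^{2})=0$, where one cannot divide by $\omega(a^{2})^{1/2}$; there the left-hand side vanishes and the inequality is trivial, so I would dispose of it at the outset and assume $\omega(a^{2})>0$ thereafter.

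I do not anticipate a genuine obstacle: the entire content lies in recognizing the Hölder/interpolation structure of the exponents and choosing the powers of $a$ that turn each Cauchy--Schwarz step into the right moment inequality. If one preferred to avoid fractional powers in the first step, it could instead be derived from the midpoint estimate $\omega(a^{(s+t)/2})^{2}\leq\omega(a^{s})\omega(a^{t})$ at $s=1,t=3$, but invoking the functional calculus for $a^{1/2}$ and $a^{3/2}$ is harmless given that $a$ is positive.
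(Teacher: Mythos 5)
Your proof is correct, but it follows a different route from the paper. The paper reduces to the commutative setting: since $a\geq 0$, the functional $f\mapsto\omega(f(a))$ is a positive linear functional on $C[0,\|a\|]$, hence given by a positive measure $\mu$, and the claim is then one application of H\"older's inequality with exponents $3/2$ and $3$, writing $x^{2}=x^{2/3}\cdot x^{4/3}$. You instead iterate the Cauchy--Schwarz inequality for the positive functional $\omega$: first $\omega(a^{2})^{2}\leq\omega(a)\,\omega(a^{3})$ (via $x=a^{3/2}$, $y=a^{1/2}$), then $\omega(a^{3})^{2}\leq\omega(a^{2})\,\omega(a^{4})$, and the two combine, after disposing of the degenerate case $\omega(a^{2})=0$, to give exactly the stated exponents. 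Both arguments are complete; yours trades the Riesz representation theorem and integral H\"older for two purely algebraic Cauchy--Schwarz steps (functional calculus is still needed, but only to define $a^{1/2}$ and $a^{3/2}$), while the paper's version buys generality cheaply --- the measure picture gives log-convexity of $s\mapsto\omega(a^{s})$ at every triple of exponents at once, not just at $1,2,4$. One small remark: your closing suggestion that the first step could "avoid fractional powers" by invoking the midpoint estimate $\omega(a^{(s+t)/2})^{2}\leq\omega(a^{s})\omega(a^{t})$ at $s=1$, $t=3$ is circular as stated, since proving that midpoint estimate by Cauchy--Schwarz requires the same elements $a^{1/2}$, $a^{3/2}$; this does not affect the validity of your main argument.
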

\begin{proof}
There exists a measure $\mu$ on $[0,\|a\|]$ such that
\[\omega(f(a))=\int f(x)\,d\mu(x),\]
for all $f\in C[0,\|a\|]$. So
\[\omega(a^{2})=\int x^{2}\,d\mu(x)\leq\left(\int x\,d\mu(x)\right)^{\frac{2}{3}}\left(\int x^{4}\,d\mu(x)\right)^{\frac{1}{3}}=\omega(a)^{\frac{2}{3}}\omega(a^{4})^{\frac{1}{3}}.\]
\end{proof}
\begin{lemma}\label{premain1}
Let $2<p<\infty$. Let $J$ be a set. Let $\mathcal{A}$ be a unital $C^{*}$-algebra. Let $\phi\colon\mathcal{A}\to B(l^{p}(J))$ be a unital homomorphism. Let $x_{0}\in l^{p}(J)$. Let $y_{0}^{*}$ be a bounded linear functional on $l^{p}(J)$. Define $\omega\colon\mathcal{A}\to\mathbb{C}$ by
\[\omega(a)=y_{0}^{*}(\phi(a)x_{0}),\]
for $a\in\mathcal{A}$. Assume that $\omega$ is a positive linear functional. Let $(b_{k})_{k\in\mathbb{N}}$ be a sequence in $\mathcal{A}$ such that $\|b_{k}\|\leq 1$ for all $k\in\mathbb{N}$ and $\phi(b_{k})x_{0}\to 0$ weakly as $k\to\infty$. Then $\omega(b_{k}^{*}b_{k})\to 0$ as $k\to\infty$.
\end{lemma}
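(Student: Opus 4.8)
The plan is to argue by contradiction. If the conclusion fails, then after passing to a subsequence we may assume there is $\varepsilon>0$ with $\omega(b_k^{*}b_k)\ge\varepsilon$ for all $k$, so that $\sum_{k=1}^{n}\omega(b_k^{*}b_k)\ge n\varepsilon$ for every $n$. I will contradict this by showing that the left-hand side grows strictly slower than $n$. Write $M=\|\phi\|$ and $\xi_k=\phi(b_k)x_0$; note $\|\xi_k\|\le M\|x_0\|$ and $\xi_k\to0$ weakly. Let $\delta_1,\dots,\delta_n$ be independent uniform $\{-1,1\}$-valued signs with expectation $\mathbb{E}$, and set $c_\delta=\sum_{k=1}^{n}\delta_k b_k$. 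Since $\mathbb{E}[\delta_i\delta_j]$ equals $1$ when $i=j$ and $0$ otherwise, linearity and positivity of $\omega$ give the identity
\[\sum_{k=1}^{n}\omega(b_k^{*}b_k)=\omega\Big(\mathbb{E}\,c_\delta^{*}c_\delta\Big)=\mathbb{E}\,\omega(c_\delta^{*}c_\delta).\]
So it suffices to bound $\mathbb{E}\,\omega(c_\delta^{*}c_\delta)$ by a quantity of order $o(n)$.

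For each fixed choice of signs I apply Lemma \ref{holder} to the positive element $|c_\delta|=\sqrt{c_\delta^{*}c_\delta}$, whose square is $c_\delta^{*}c_\delta$ and whose fourth power is $(c_\delta^{*}c_\delta)^{2}$:
\[\omega(c_\delta^{*}c_\delta)\le\omega(|c_\delta|)^{\frac23}\,\omega\big((c_\delta^{*}c_\delta)^{2}\big)^{\frac13}.\]
The two factors are controlled in completely different ways. For $\omega(|c_\delta|)$ I use Lemma \ref{pdreverse}: choosing $\|d_m\|\le1$ with $d_mc_\delta\to|c_\delta|$ and using $\omega(a)=y_0^{*}(\phi(a)x_0)$ together with $\phi(d_mc_\delta)=\phi(d_m)\phi(c_\delta)$ gives
\[\omega(|c_\delta|)=\lim_{m}y_0^{*}\big(\phi(d_m)\phi(c_\delta)x_0\big)\le\|y_0^{*}\|\,M\,\|\phi(c_\delta)x_0\|=\|y_0^{*}\|\,M\,\Big\|\sum_{k=1}^{n}\delta_k\xi_k\Big\|.\]
For the second factor I expand $(c_\delta^{*}c_\delta)^{2}=\sum_{i,j,k,l}\delta_i\delta_j\delta_k\delta_l\,b_i^{*}b_jb_k^{*}b_l$ and take $\mathbb{E}$: only tuples whose indices pair up survive, of which there are $O(n^{2})$, and each surviving term has operator norm at most $1$; hence $\mathbb{E}\,\omega\big((c_\delta^{*}c_\delta)^{2}\big)\le C\|\omega\|\,n^{2}$. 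This cancellation of the cross terms is exactly what the random signs buy me, and it replaces the useless deterministic estimate $\|c_\delta\|^{4}\le n^{4}$.

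It remains to estimate $\big\|\sum_{k}\delta_k\xi_k\big\|_{l^{p}}$, and here the weak nullity of $(\xi_k)$ and the hypothesis $p>2$ enter. By a gliding-hump argument I pass to a further subsequence (along which $\omega(b_k^{*}b_k)\ge\varepsilon$ still holds) for which the $\xi_k$ are, up to a summable perturbation, disjointly supported; for disjointly supported vectors $\big\|\sum_k\delta_k\xi_k\big\|_{l^{p}}=\big(\sum_k\|\xi_k\|^{p}\big)^{1/p}$, so $\big\|\sum_k\delta_k\xi_k\big\|_{l^{p}}=O(n^{1/p})$ uniformly in $\delta$. Feeding this and the $O(n^{2})$ bound into the displayed Hölder inequality and averaging over the signs (pulling out the uniform bound on the first factor and using Jensen's inequality for the concave function $t\mapsto t^{1/3}$ on the second) yields
\[\mathbb{E}\,\omega(c_\delta^{*}c_\delta)\le\Big(\sup_\delta\omega(|c_\delta|)\Big)^{\frac23}\big(\mathbb{E}\,\omega((c_\delta^{*}c_\delta)^{2})\big)^{\frac13}\le C'\,n^{\frac{2}{3p}}\cdot n^{\frac23}=C'\,n^{\frac23+\frac{2}{3p}}.\]
Since $p>2$ we have $\tfrac23+\tfrac{2}{3p}<1$, so $\sum_{k=1}^{n}\omega(b_k^{*}b_k)=o(n)$, contradicting $\sum_{k=1}^{n}\omega(b_k^{*}b_k)\ge n\varepsilon$. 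The main obstacle is precisely obtaining $o(n)$ rather than $O(n)$: a bound on $\big\|\sum_k\delta_k\xi_k\big\|$ that ignores weak nullity only gives order $\sqrt n$ (and the whole estimate only order $n$, which is too weak), so one must use weak convergence, via disjointification, to improve the growth to $n^{1/p}$, and it is this improvement combined with the sign-cancellation bound $O(n^{2})$ that forces the strict inequality $p>2$.
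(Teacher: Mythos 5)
Your proposal is correct and follows essentially the same route as the paper's proof: the same random-sign construction $\sum_k\delta_k b_k$, the same use of Lemma \ref{holder} and Lemma \ref{pdreverse}, the same disjointification of the weakly null sequence $(\phi(b_k)x_0)$ to get the $O(n^{1/p})$ bound, and the same comparison of growth rates forced by $p>2$. The only cosmetic difference is that you pull out $\sup_\delta\omega(|c_\delta|)$ and apply Jensen's inequality to the fourth-moment factor, whereas the paper applies H\"older's inequality for expectations to both factors and then rearranges into the inequality $n^{1/2}\lesssim n^{1/p}$; the two arrangements are equivalent.
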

\begin{proof}
By contradiction, suppose that $\omega(b_{k}^{*}b_{k})$ does not converge to 0. Passing to a subsequence, we have that there exists $\gamma>0$ such that $\omega(b_{k}^{*}b_{k})\geq\gamma$ for all $k\in\mathbb{N}$.

Since $\|\phi(b_{k})x_{0}\|\leq\|\phi\|\|x_{0}\|$ and $\phi(b_{k})x_{0}\to 0$ weakly, passing to a further subsequence, we may assume that there are $z_{1},z_{2},\ldots$ in $l^{p}(J)$ with disjoint supports such that $\|z_{k}\|\leq\|\phi\|\|x_{0}\|$ and $\|\phi(b_{k})x_{0}-z_{k}\|\leq\frac{1}{2^{k}}$ for all $k\in\mathbb{N}$.

Let $n\in\mathbb{N}$. For each $\delta=(\delta_{1},\ldots,\delta_{n})\in\{-1,1\}^{n}$, let
\[a_{\delta}=\left|\sum_{k=1}^{n}\delta_{k}b_{k}\right|\in\mathcal{A}.\]
By Lemma \ref{holder},
\[\omega(a_{\delta}^{2})\leq\omega(a_{\delta})^{\frac{2}{3}}\omega(a_{\delta}^{4})^{\frac{1}{3}}.\]
Thus,
\[\mathbb{E}\omega(a_{\delta}^{2})\leq[\mathbb{E}\omega(a_{\delta})]^{\frac{2}{3}}[\mathbb{E}\omega(a_{\delta}^{4})]^{\frac{1}{3}},\]
where $\mathbb{E}$ denotes expectation over $\delta=(\delta_{1},\ldots,\delta_{n})$ uniformly distributed on $\{-1,1\}^{n}$.

Note that
\begin{eqnarray*}
\mathbb{E}\omega(a_{\delta}^{2})&=&\mathbb{E}\omega\left(\left(\sum_{k=1}^{n}\delta_{k}b_{k}\right)^{*}
\left(\sum_{k=1}^{n}\delta_{k}b_{k}\right)\right)\\&=&
\mathbb{E}\omega\left(\sum_{1\leq j,k\leq n}\delta_{j}\delta_{k}b_{j}^{*}b_{k}\right)=\sum_{1\leq j,k\leq n}\mathbb{E}(\delta_{j}\delta_{k})\omega(b_{j}^{*}b_{k})=\sum_{k=1}^{n}\omega(b_{k}^{*}b_{k})\geq n\gamma.
\end{eqnarray*}
Therefore,
\begin{equation}\label{intermb}
n\gamma\leq[\mathbb{E}\omega(a_{\delta})]^{\frac{2}{3}}[\mathbb{E}\omega(a_{\delta}^{4})]^{\frac{1}{3}}.
\end{equation}
We have
\[a_{\delta}^{4}=\left[\left(\sum_{k=1}^{n}\delta_{k}b_{k}\right)^{*}\left(\sum_{k=1}^{n}\delta_{k}b_{k}\right)\right]^{2}=
\sum_{1\leq k_{1},\ldots,k_{4}\leq n}\delta_{k_{1}}\delta_{k_{2}}\delta_{k_{3}}\delta_{k_{4}}b_{k_{1}}^{*}b_{k_{2}}b_{k_{3}}^{*}b_{k_{4}}.\]
Since $\|b_{k}\|\leq 1$, it follows that
\[\mathbb{E}\omega(a_{\delta}^{4})=\sum_{1\leq k_{1},\ldots,k_{4}\leq n}\mathbb{E}(\delta_{k_{1}}\delta_{k_{2}}\delta_{k_{3}}\delta_{k_{4}})\omega(b_{k_{1}}^{*}b_{k_{2}}b_{k_{3}}^{*}b_{k_{4}})\leq\sum_{1\leq k_{1},\ldots,k_{4}\leq n}\mathbb{E}(\delta_{k_{1}}\delta_{k_{2}}\delta_{k_{3}}\delta_{k_{4}}).\]
Note that $\mathbb{E}(\delta_{k_{1}}\delta_{k_{2}}\delta_{k_{3}}\delta_{k_{4}})=0$ unless the following occurs:
\[(k_{1}=k_{2}\text{ and }k_{3}=k_{4})\text{ or }(k_{1}=k_{3}\text{ and }k_{2}=k_{4})\text{ or }(k_{1}=k_{4}\text{ and }k_{2}=k_{3}).\]
Thus, $\mathbb{E}\omega(a_{\delta}^{4})\leq 3n^{2}$. So by (\ref{intermb}), we have $n\gamma\leq 3^{\frac{1}{3}}n^{\frac{2}{3}}[\mathbb{E}\omega(a_{\delta})]^{\frac{2}{3}}$. Hence,
\begin{equation}\label{intermb2}
\mathbb{E}\omega(a_{\delta})\geq\frac{\gamma^{\frac{3}{2}}}{3^{\frac{1}{2}}}n^{\frac{1}{2}}.
\end{equation}
Fix $\delta\in\{-1,1\}^{n}$. By Lemma \ref{pdreverse},
\[\omega(a_{\delta})=\omega\left(\left|\sum_{k=1}^{n}\delta_{k}b_{k}\right|\right)\leq
\sup_{c\in\mathcal{A},\,\|c\|\leq 1}\left|\omega\left(c\sum_{k=1}^{n}\delta_{k}b_{k}\right)\right|.\]
For $c\in\mathcal{A}$ with $\|c\|\leq 1$,
\begin{eqnarray*}
\left|\omega\left(c\sum_{k=1}^{n}\delta_{k}b_{k}\right)\right|&=&
\left|y_{0}^{*}\left(\phi(c)\left(\sum_{k=1}^{n}\delta_{k}\phi(b_{k})x_{0}\right)\right)\right|\\&\leq&
\|y_{0}^{*}\|\|\phi\|\left\|\sum_{k=1}^{n}\delta_{k}\phi(b_{k})x_{0}\right\|\\&\leq&
\|y_{0}^{*}\|\|\phi\|\left(\left\|\sum_{k=1}^{n}\delta_{k}z_{k}\right\|+\sum_{k=1}^{n}\frac{1}{2^{k}}\right)\\&\leq&
\|y_{0}^{*}\|\|\phi\|(\|\phi\|\|x_{0}\|n^{\frac{1}{p}}+1),
\end{eqnarray*}
where the last two inequalities follow from the fact that $z_{1},z_{2},\ldots$ have disjoint supports, $\|z_{k}\|\leq\|\phi\|\|x_{0}\|$ and $\|\phi(b_{k})x_{0}-z_{k}\|\leq\frac{1}{2^{k}}$. Thus, $\omega(a_{\delta})\leq\|y_{0}^{*}\|\|\phi\|(\|\phi\|\|x_{0}\|n^{\frac{1}{p}}+1)$ for all $\delta\in\{-1,1\}^{n}$. So by (\ref{intermb2}),
\[\frac{\gamma^{\frac{3}{2}}}{3^{\frac{1}{2}}}n^{\frac{1}{2}}\leq\|y_{0}^{*}\|\|\phi\|(\|\phi\|\|x_{0}\|n^{\frac{1}{p}}+1).\]
Since $n$ can be chosen to be arbitrarily large and $p>2$, an absurdity follows.
\end{proof}
For $1<p<2$, we have the following result, where the order of $b_{k}^{*}$ and $b_{k}$ are switched, by using the dual $l^{p}$ space in Lemma \ref{premain1}.
\begin{lemma}\label{premain2}
Let $1<p<2$. Let $J$ be a set. Let $\mathcal{A}$ be a unital $C^{*}$-algebra. Let $\phi\colon\mathcal{A}\to B(l^{p}(J))$ be a unital homomorphism. Let $x_{0}\in l^{p}(J)$. Let $y_{0}^{*}$ be a bounded linear functional on $l^{p}$. Define $\omega\colon\mathcal{A}\to\mathbb{C}$ by
\[\omega(a)=y_{0}^{*}(\phi(a)x_{0}),\]
for $a\in\mathcal{A}$. Let $(b_{k})_{k\in\mathbb{N}}$ be a sequence in $\mathcal{A}$ such that $\|b_{k}\|\leq 1$ for all $k\in\mathbb{N}$ and that the sequence $y_{0}^{*}\circ\phi(b_{k})$ of bounded linear functionals on $l^{p}(J)$ converges to $0$ weakly as $k\to\infty$. Assume that $\omega$ is a positive linear functional. Then $\omega(b_{k}b_{k}^{*})\to 0$ as $k\to\infty$.
\end{lemma}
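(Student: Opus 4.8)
The plan is to deduce this from Lemma \ref{premain1} applied to the dual space $l^{q}(J)$, where $q=p/(p-1)\in(2,\infty)$ is the conjugate exponent. I would identify $(l^{p}(J))^{*}=l^{q}(J)$ via the bilinear pairing $\langle x,f\rangle=\sum_{j\in J}x(j)f(j)$, and for $T\in B(l^{p}(J))$ write $T^{\mathrm{t}}\in B(l^{q}(J))$ for its Banach-space adjoint, characterized by $\langle Tx,f\rangle=\langle x,T^{\mathrm{t}}f\rangle$. The map $T\mapsto T^{\mathrm{t}}$ is linear but \emph{anti}-multiplicative, so $a\mapsto\phi(a)^{\mathrm{t}}$ is a linear anti-homomorphism of $\mathcal{A}$. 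To convert it into an honest homomorphism I would pass to the opposite $C^{*}$-algebra $\mathcal{A}^{\mathrm{op}}$ (the same underlying Banach space, involution and norm, but reversed product), which is again a unital $C^{*}$-algebra with the same positive cone, and define $\Phi\colon\mathcal{A}^{\mathrm{op}}\to B(l^{q}(J))$ by $\Phi(a)=\phi(a)^{\mathrm{t}}$. Then $\Phi$ is a unital homomorphism with $\|\Phi\|\le\|\phi\|$.

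Next I would transport the data of the lemma through this duality. Set $\tilde{x}_{0}=y_{0}^{*}\in l^{q}(J)$ and take $\tilde{y}_{0}^{*}=x_{0}$, viewed as a bounded linear functional on $l^{q}(J)$ via the pairing. A direct computation using $\langle x_{0},\phi(a)^{\mathrm{t}}y_{0}^{*}\rangle=\langle\phi(a)x_{0},y_{0}^{*}\rangle=y_{0}^{*}(\phi(a)x_{0})$ shows that the functional $\tilde{\omega}(a)=\tilde{y}_{0}^{*}(\Phi(a)\tilde{x}_{0})$ coincides with $\omega$. Since the positive cone of $\mathcal{A}^{\mathrm{op}}$ equals that of $\mathcal{A}$, the hypothesis that $\omega$ is positive makes $\tilde{\omega}$ a positive linear functional on $\mathcal{A}^{\mathrm{op}}$. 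Moreover $\Phi(b_{k})\tilde{x}_{0}=\phi(b_{k})^{\mathrm{t}}y_{0}^{*}=y_{0}^{*}\circ\phi(b_{k})$, which tends to $0$ weakly in $l^{q}(J)$ by hypothesis, and $\|b_{k}\|\le1$ in $\mathcal{A}^{\mathrm{op}}$.

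With all the hypotheses of Lemma \ref{premain1} now in place (exponent $q\in(2,\infty)$, algebra $\mathcal{A}^{\mathrm{op}}$, homomorphism $\Phi$, sequence $(b_{k})$), its conclusion gives $\tilde{\omega}(b_{k}^{*}\cdot_{\mathrm{op}}b_{k})\to0$. Here the involution of $\mathcal{A}^{\mathrm{op}}$ is the original one, and $b_{k}^{*}\cdot_{\mathrm{op}}b_{k}=b_{k}b_{k}^{*}$ in $\mathcal{A}$, so this reads $\omega(b_{k}b_{k}^{*})\to0$, which is exactly the claim; the switch in the order of $b_{k}^{*}$ and $b_{k}$ is produced precisely by passing to the opposite product. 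I expect the only delicate point to be the bookkeeping across the reversals involved — the anti-multiplicativity of the adjoint and the opposite product, together with the coincidence of the positive cones of $\mathcal{A}$ and $\mathcal{A}^{\mathrm{op}}$ — so that $\tilde{\omega}$ is genuinely positive and linear on $\mathcal{A}^{\mathrm{op}}$ and the factors flip in the intended way; once these are aligned, the quantitative estimate is entirely inherited from Lemma \ref{premain1}.
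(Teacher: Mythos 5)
Your proposal is correct and is essentially the paper's own proof: the paper likewise passes to the reverse-multiplication algebra $\mathcal{A}_{1}=\mathcal{A}^{\mathrm{op}}$, acts on the dual space via $\phi_{1}(a)y^{*}=y^{*}\circ\phi(a)$ (your $\Phi(a)=\phi(a)^{\mathrm{t}}$), takes $x_{0}^{**}$ as the new functional (your $\tilde{y}_{0}^{*}$), and invokes Lemma \ref{premain1} on $(l^{p}(J))^{*}\cong l^{q}(J)$ with $q>2$. If anything, your write-up makes explicit two points the paper leaves implicit — the identification of the dual with $l^{q}(J)$ and the coincidence of the positive cones of $\mathcal{A}$ and $\mathcal{A}^{\mathrm{op}}$ — so the bookkeeping you flag as delicate is exactly right and fully aligned with the intended argument.
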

\begin{proof}
Let $\mathcal{A}_{1}$ be the unital $C^{*}$-algebra consisting of the same elements as $\mathcal{A}$ but with reverse order multiplication
\[a\cdot b=ba.\]
Define a unital homomorphism $\phi_{1}\colon\mathcal{A}_{1}\to B((l^{p}(J))^{*})$ by
\[\phi_{1}(a)y^{*}=y^{*}\circ\phi(a),\]
for all $a\in\mathcal{A}_{1},\,y^{*}\in(l^{p}(J))^{*}$. Define $\omega_{1}\colon\mathcal{A}_{1}\to\mathbb{C}$ by
\[\omega_{1}(a)=\omega(a)=x_{0}^{**}(\phi(a)y_{0}^{*}),\]
for all $a\in\mathcal{A}_{1}$, where $x_{0}^{**}$ is the image of $x_{0}$ in the bidual $(l^{p})^{**}$. By Lemma \ref{premain1}, the result follows.
\end{proof}
The following two lemmas are needed for the proof of Lemma \ref{omega}.
\begin{lemma}[\cite{Clarkson}]\label{unifconv}
Let $1<p<\infty$. Let $J$ be a set. For every $\epsilon>0$, there exists $\gamma>0$ such that for all $x,y\in l^{p}(J)$ satisfying $\|x\|,\|y\|\leq 1$ and $\|x+y\|>2-\gamma$, we have $\|x-y\|<\epsilon$.
\end{lemma}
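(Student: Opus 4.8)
The plan is to deduce uniform convexity from Clarkson's inequalities, treating the ranges $p\ge 2$ and $1<p<2$ separately. In both cases the passage from the relevant Clarkson inequality to the $(\epsilon,\gamma)$ statement is the same short argument: Clarkson's inequality will give a bound of the form $\left\|\frac{x-y}{2}\right\|^{r}\le 1-\left\|\frac{x+y}{2}\right\|^{r}$ for a suitable exponent $r$ (using $\|x\|,\|y\|\le 1$), and then the hypothesis $\|x+y\|>2-\gamma$ forces $\left\|\frac{x+y}{2}\right\|>1-\frac{\gamma}{2}$, whence $\left\|\frac{x-y}{2}\right\|^{r}<1-\left(1-\frac{\gamma}{2}\right)^{r}$. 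Since the right-hand side tends to $0$ as $\gamma\to 0^{+}$ and depends only on $p$ and $\gamma$, choosing $\gamma$ small enough makes $\|x-y\|<\epsilon$, uniformly in $x,y$.

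For $p\ge 2$ I would use the first Clarkson inequality $\left\|\frac{x+y}{2}\right\|^{p}+\left\|\frac{x-y}{2}\right\|^{p}\le\frac{1}{2}(\|x\|^{p}+\|y\|^{p})$, which reduces by summation over $J$ to the scalar inequality $\left|\frac{a+b}{2}\right|^{p}+\left|\frac{a-b}{2}\right|^{p}\le\frac{1}{2}(|a|^{p}+|b|^{p})$ for $a,b\in\mathbb{C}$. The latter follows from the parallelogram law $|a+b|^{2}+|a-b|^{2}=2(|a|^{2}+|b|^{2})$ together with monotonicity of $\ell^{r}$-norms (for $r=\frac{p}{2}\ge 1$) and convexity of $t\mapsto t^{p/2}$; concretely, $(|a+b|^{2})^{p/2}+(|a-b|^{2})^{p/2}\le(|a+b|^{2}+|a-b|^{2})^{p/2}=2^{p/2}(|a|^{2}+|b|^{2})^{p/2}\le 2^{p-1}(|a|^{p}+|b|^{p})$. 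With $r=p$ the deduction above then gives uniform convexity.

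For $1<p<2$, writing $q=\frac{p}{p-1}$ for the conjugate exponent, I would instead establish the second Clarkson inequality $\left\|\frac{x+y}{2}\right\|^{q}+\left\|\frac{x-y}{2}\right\|^{q}\le\left(\frac{1}{2}(\|x\|^{p}+\|y\|^{p})\right)^{q-1}$ and then apply the same deduction with $r=q$. Setting $a=\frac{x+y}{2}$ and $b=\frac{x-y}{2}$, this inequality is equivalent to $\|a\|_{p}^{q}+\|b\|_{p}^{q}\le\left(\frac{1}{2}(\|a+b\|_{p}^{p}+\|a-b\|_{p}^{p})\right)^{q/p}$, which I would prove in two steps. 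First, the triangle inequality for the $\ell^{q/p}$-norm on $\mathbb{R}^{2}$ (legitimate since $\frac{q}{p}\ge 1$) applied to the vectors $(|a_{j}|^{p},|b_{j}|^{p})$ gives $(\|a\|_{p}^{q}+\|b\|_{p}^{q})^{p/q}\le\sum_{j\in J}(|a_{j}|^{q}+|b_{j}|^{q})^{p/q}$. Second, this is combined with the scalar inequality $(|a_{j}|^{q}+|b_{j}|^{q})^{p/q}\le\frac{1}{2}(|a_{j}+b_{j}|^{p}+|a_{j}-b_{j}|^{p})$; summing over $j$ and raising to the power $\frac{q}{p}$ yields the second Clarkson inequality.

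The main obstacle is the last scalar inequality, i.e.\ proving $(|a|^{q}+|b|^{q})^{p/q}\le\frac{1}{2}(|a+b|^{p}+|a-b|^{p})$ for all complex $a,b$ when $1<p<2$. Unlike the $p\ge 2$ case, there is no convenient pointwise inequality to sum: indeed for $1<p<2$ the termwise bound $\left|\frac{a+b}{2}\right|^{p}+\left|\frac{a-b}{2}\right|^{p}\le\frac{1}{2}(|a|^{p}+|b|^{p})$ actually fails (as one sees at $a=1$, $b=i$), which is exactly why the reflexive sub-$2$ range is harder. This two-variable complex inequality must therefore be verified directly, e.g.\ by reducing to real variables via the moduli and relative phase of $a$ and $b$ and then a calculus/convexity argument; this is the delicate computational heart of Clarkson's theorem. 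Once it is in hand, the Minkowski step, the summation, and the final $(\epsilon,\gamma)$ deduction are all routine.
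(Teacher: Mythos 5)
The paper does not actually prove this lemma: it is quoted, with a citation, from Clarkson's 1936 paper, so the only question is whether your argument stands on its own. Most of it does. The $p\ge 2$ half is complete and correct: the pointwise inequality via the parallelogram law, superadditivity of $t\mapsto t^{r}$ for $r\ge 1$, and convexity is valid, and the final $(\epsilon,\gamma)$ deduction from the resulting Clarkson inequality is routine, as you say. Your reduction of the case $1<p<2$ is also sound: the ``crossed'' Minkowski step (the triangle inequality for the $\ell^{q/p}$-norm on $\mathbb{R}^{2}$, summed over $j\in J$) is legitimate since $q/p\ge 1$, and your observation that the naive termwise inequality fails at $a=1$, $b=i$ is correct and correctly identifies why this range is harder.

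The genuine gap is the step you yourself flag and then skip: the scalar inequality $(|a|^{q}+|b|^{q})^{p/q}\le\frac{1}{2}\left(|a+b|^{p}+|a-b|^{p}\right)$ for complex $a,b$ and $1<p<2$ is asserted, not proved, and it is not a routine verification --- it is precisely the two-point form of Clarkson's second inequality, i.e.\ the entire analytic content of the theorem in the sub-$2$ range; everything else in your argument is bookkeeping around it. Since the paper needs the lemma for both ranges (Lemma \ref{omega} feeds into both cases of Theorem \ref{main}(i)), the proof as written is incomplete. The cleanest way to fill the hole is two-point interpolation: the operator $T(a,b)=(a+b,\,a-b)$ has norm $1$ from $\ell^{1}(\{1,2\})$ to $\ell^{\infty}(\{1,2\})$ and norm $\sqrt{2}$ on $\ell^{2}(\{1,2\})$, so Riesz--Thorin (legitimate here, since the scalars are complex) gives $\left(|a+b|^{q}+|a-b|^{q}\right)^{1/q}\le 2^{1/q}\left(|a|^{p}+|b|^{p}\right)^{1/p}$; substituting $a\mapsto\frac{a+b}{2}$, $b\mapsto\frac{a-b}{2}$ and using $2^{1/q-1}=2^{-1/p}$ yields exactly the inequality you need. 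Alternatively one can reproduce Clarkson's original calculus argument (reduction to moduli and relative phase, as you suggest) or invoke Hanner's inequality, but some such argument must actually be supplied before the $1<p<2$ half of the lemma is proved.
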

\begin{lemma}[\cite{Russo}]\label{russodye}
Let $\mathcal{A}$ be a unital $C^{*}$-algebra. Then the closed unital ball of $\mathcal{A}$ is the closed convex hull of the set of all unitary elements of $\mathcal{A}$.
\end{lemma}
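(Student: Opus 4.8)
The plan is to prove the two inclusions between the closed unit ball of $\mathcal{A}$ and the closed convex hull of the unitary group $\mathcal{U}(\mathcal{A})$. One inclusion is immediate: every unitary has norm $1$, so $\mathcal{U}(\mathcal{A})$ lies in the closed unit ball, which is convex and norm-closed; hence the closed convex hull of $\mathcal{U}(\mathcal{A})$ is contained in the closed unit ball. The content of the lemma is the reverse inclusion, and I would in fact establish the sharper statement that \emph{every} element of norm at most $1$ is an average of four unitaries, so that the reverse inclusion holds already at the level of the convex hull, before any closure is taken.

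The engine of the argument is the observation that a self-adjoint contraction is the real part of a unitary. Concretely, given a self-adjoint $h\in\mathcal{A}$ with $\|h\|\le 1$, I would use the continuous functional calculus (available because $\mathcal{A}$ is unital and $h$ generates a commutative unital $C^{*}$-subalgebra isomorphic to $C(\mathrm{spec}(h))$) to form $k=\sqrt{1-h^{2}}$; this is legitimate because $0\le h^{2}\le\|h\|^{2}\le 1$, so $1-h^{2}\ge 0$. Setting $u=h+ik$, the elements $h$ and $k$ are both functions of $h$ and therefore commute, so a direct computation gives $u^{*}u=uu^{*}=h^{2}+k^{2}=1$; thus $u$ is unitary and $\tfrac12(u+u^{*})=h$. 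Hence $h$ is the average of the two unitaries $u$ and $u^{*}$.

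For a general $a$ with $\|a\|\le 1$, I would split $a=\mathrm{Re}(a)+i\,\mathrm{Im}(a)$ with $\mathrm{Re}(a)=\tfrac12(a+a^{*})$ and $\mathrm{Im}(a)=\tfrac1{2i}(a-a^{*})$, both self-adjoint. The triangle inequality together with $\|a^{*}\|=\|a\|$ gives $\|\mathrm{Re}(a)\|\le 1$ and $\|\mathrm{Im}(a)\|\le 1$, so the previous paragraph writes $\mathrm{Re}(a)=\tfrac12(u_{1}+u_{2})$ and $\mathrm{Im}(a)=\tfrac12(v_{1}+v_{2})$ with $u_{1},u_{2},v_{1},v_{2}$ unitary. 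Combining,
\[
a=\mathrm{Re}(a)+i\,\mathrm{Im}(a)=\tfrac14\bigl(u_{1}+u_{2}+(iv_{1})+(iv_{2})\bigr),
\]
and since $iv_{1}$ and $iv_{2}$ are again unitary (the scalar field is $\mathbb{C}$), $a$ is a convex combination of four unitaries. This finishes the reverse inclusion.

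I expect the one point that needs care to be the justification that $1-h^{2}$ admits a self-adjoint square root \emph{inside} $\mathcal{A}$; this is precisely where unitality of $\mathcal{A}$ enters, both to make sense of $1-h^{2}$ and to guarantee that the functional calculus returns an element of $\mathcal{A}$ rather than of a unitization. Everything else is elementary algebra together with the $C^{*}$-identity $\|a^{*}\|=\|a\|$. In particular no appeal to the closure is required by this argument; the closure in the statement is harmless but superfluous for the proof I propose.
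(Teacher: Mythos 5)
Your reduction of a self-adjoint contraction to a mean of two unitaries is correct ($u=h+i\sqrt{1-h^{2}}$ is unitary and $h=\tfrac12(u+u^{*})$), but the final assembly fails on the arithmetic. From $\mathrm{Re}(a)=\tfrac12(u_{1}+u_{2})$ and $\mathrm{Im}(a)=\tfrac12(v_{1}+v_{2})$ you get
\[
a=\mathrm{Re}(a)+i\,\mathrm{Im}(a)=\tfrac12\bigl(u_{1}+u_{2}+iv_{1}+iv_{2}\bigr),
\]
whose coefficients sum to $2$; this is not a convex combination. The identity you display, $a=\tfrac14(u_{1}+u_{2}+iv_{1}+iv_{2})$, is false: its right-hand side equals $a/2$. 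What the real/imaginary splitting actually proves is only that every element of norm at most $\tfrac12$ is an average of four unitaries, i.e.\ that the closed unit ball is contained in $2\,\overline{\mathrm{conv}}\,\mathcal{U}(\mathcal{A})$. That lost factor of $2$ is fatal for the way the lemma is used in this paper: in the proof of Lemma \ref{omega} one needs precisely $\|\omega\|\leq\sup_{u\in\mathcal{U}(\mathcal{A})}\|\phi(u)x_{0}\|$ in order to conclude $\|\omega\|=\omega(1)$ and hence that $\omega$ is positive; with an extra factor of $2$ the positivity argument collapses.

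Moreover, the sharper statement you aim for --- every contraction is a convex combination of four unitaries, with no closure needed --- is not merely unproven but false, so no repair of the coefficient bookkeeping can save this route. A non-unitary isometry, such as the unilateral shift on $\ell^{2}$, is an extreme point of the closed unit ball of $B(\ell^{2})$, and an extreme point that is a convex combination of unitaries must itself be unitary; hence the shift lies in the closed convex hull of the unitaries but not in the convex hull. The closure in the statement is therefore essential, and the theorem genuinely requires more than the self-adjoint trick. A correct elementary route (Gardner's proof of the Russo--Dye theorem) runs as follows: first, every \emph{invertible} contraction $x$ is a mean of two unitaries --- write $x=v|x|$ with $v=x|x|^{-1}$ unitary and apply your construction to the self-adjoint contraction $|x|$; second, if $\|a\|<1$ and $u$ is unitary then $a+u=u(1+u^{*}a)$ is invertible, and an induction built on these two facts expresses every element of the open unit ball as a mean of finitely many unitaries; finally, elements of norm exactly $1$ are reached by taking closures. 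Note also that the paper offers no proof of this lemma --- it quotes the result of Russo and Dye --- so the only issue is correctness, and as written your argument does not establish the statement.
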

\begin{lemma}\label{omega}
Let $1<p<\infty$. Let $J$ be a set. Let $\mathcal{A}$ be a unital $C^{*}$-algebra. Let $\phi\colon\mathcal{A}\to B(l^{p}(J))$ be a unital homomorphism. Let $x_{0}\in l^{p}(J)$. Then there exists $y_{0}^{*}\in(l^{p}(J))^{*}$ such that $\omega\colon\mathcal{A}\to\mathbb{C}$,
\[\omega(a)=y_{0}^{*}(\phi(a)x_{0}),\quad a\in\mathcal{A},\]
defines a positive linear functional and for every $\epsilon>0$, there exists $\gamma>0$ such that whenever $a\in\mathcal{A}$ satisfies $\|a\|\leq 1$ and $\omega(a^{*}a)<\gamma$, we have $\|\phi(a)x_{0}\|<\epsilon$.
\end{lemma}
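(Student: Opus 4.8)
The plan is to produce the functional $y_0^*$ in two stages: first arrange positivity of $\omega$, and then the quantitative ``coercivity'' estimate, the latter being where uniform convexity does the work. Let $K$ denote the norm closure of $\{\phi(a)x_0\colon\|a\|\le 1\}$; since the unit ball of $\mathcal{A}$ is convex and balanced and $\phi$ is linear, $K$ is a closed, bounded, convex, balanced subset of $l^p(J)$ containing $x_0=\phi(1)x_0$. I would first check that $x_0$ lies on the boundary of $K$ (assuming $x_0\ne 0$): if instead $x_0\in tK$ for some $t<1$, then $x_0=\lim_n\phi(b_n)x_0$ with $\|b_n\|\le t<1$, and iterating the estimate $\|\phi(b_n)^m\|\le\|\phi\|\,\|b_n\|^m$ gives $\|x_0\|\le\|\phi\|t^m\|x_0\|+\frac{\|\phi\|}{1-t}\|\phi(b_n)x_0-x_0\|$ for every $m$, forcing $\|x_0\|=0$. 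Hence there is a supporting functional: separating $x_0$ from $tK$ for each $t<1$ by Hahn--Banach and passing to a weak$^*$ limit via Banach--Alaoglu, I get $y_0^*\in(l^p(J))^*$ with $y_0^*(x_0)=\sup_{v\in K}\text{Re}\,y_0^*(v)$, this supremum being real. As $K$ is balanced this equals $\sup_{v\in K}|y_0^*(v)|$, so $\|\omega\|=\sup_{\|a\|\le 1}|y_0^*(\phi(a)x_0)|=y_0^*(x_0)=\omega(1)$; by the standard criterion $\|\omega\|=\omega(1)$ (with Lemma \ref{russodye} available to compute the supremum over unitaries), $\omega$ is a positive linear functional.

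For coercivity I would first reduce to positive contractions. By the polar-decomposition trick behind Lemma \ref{pdreverse}, writing $a=\lim_n d_n|a|$ with $\|d_n\|\le 1$ (for instance $d_n=a|a|(|a|^2+\tfrac1n)^{-1}$), one obtains $\|\phi(a)x_0\|\le\|\phi\|\,\|\phi(|a|)x_0\|$, while $\omega(a^*a)=\omega(|a|^2)$. So it suffices to show: for $0\le h\le 1$, smallness of $\omega(h^2)$ forces smallness of $\|\phi(h)x_0\|$. Since $\omega$ is positive, Cauchy--Schwarz gives $\omega(h)\le\omega(1)^{1/2}\omega(h^2)^{1/2}$, and therefore $v:=\phi(1-h)x_0\in K$ satisfies $\text{Re}\,y_0^*(v)=y_0^*(x_0)-\omega(h)\to y_0^*(x_0)=\sup_{K}\text{Re}\,y_0^*$ as $\omega(h^2)\to 0$. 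Thus coercivity is precisely the assertion that $y_0^*$ \emph{strongly exposes} $x_0$ in $K$: if $v\in K$ and $\text{Re}\,y_0^*(v)\to\sup_K\text{Re}\,y_0^*$, then $v\to x_0$ in norm; for then $\phi(h)x_0=x_0-v\to 0$.

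The heart of the argument is proving this strong-exposure statement, and this is where Lemma \ref{unifconv} must enter. The natural way to invoke uniform convexity is to compare $v$ and $x_0$ through the behaviour of $\|\tfrac{v+x_0}{2}\|$ relative to the common norm, which requires the vectors involved to have (nearly) maximal and (nearly) equal norm. To arrange this I would pass to the $\phi$-invariant equivalent norm $\vert\vert\vert w\vert\vert\vert=\sup_{\|a\|\le 1}\|\phi(a)w\|$, under which every $\phi(u)$ with $u$ unitary is an isometry, the whole orbit $\{\phi(u)x_0\}$ is equinormal, and $x_0$ is a point of maximal $\vert\vert\vert\cdot\vert\vert\vert$-norm in $K$; choosing $y_0^*$ to norm $x_0$ for $\vert\vert\vert\cdot\vert\vert\vert$ then makes $\vert\vert\vert\tfrac{v+x_0}{2}\vert\vert\vert\to\vert\vert\vert x_0\vert\vert\vert$, which by uniform convexity would give $\vert\vert\vert v-x_0\vert\vert\vert\to 0$ and hence $\|v-x_0\|\to 0$.

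The main obstacle is exactly that $\vert\vert\vert\cdot\vert\vert\vert$, being a supremum of norms, need not itself be uniformly convex, so Lemma \ref{unifconv} does not apply to it verbatim. The substantive work is therefore to reconcile the invariance needed for positivity with the uniform convexity of the genuine $l^p(J)$ norm needed for coercivity --- either by producing an equivalent $\phi$-invariant uniformly convex norm, or by carrying out the estimate directly in $l^p(J)$ while controlling the orbit norms $\|\phi(u)x_0\|$ (which lie between $\|x_0\|/\|\phi\|$ and $\|\phi\|\,\|x_0\|$). I expect this reconciliation, rather than either the supporting-functional construction or the polar reduction, to be the crux of the proof.
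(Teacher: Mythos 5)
Your proposal stops short of a proof, and both halves have real problems. In the first stage, the jump from ``$x_0$ is a boundary point of $K$'' to ``hence there is a supporting functional at $x_0$'' is not valid. If you normalize the separating functionals $y_t^*$ so that Banach--Alaoglu applies (say $\|y_t^*\|=1$), nothing prevents $\sup_{v\in K}|y_t^*(v)|\to 0$ as $t\to 1$, in which case the weak$^*$ limit vanishes identically on $K$ and your identity $y_0^*(x_0)=\sup_K\mathrm{Re}\,y_0^*$ holds vacuously with both sides equal to $0$; the resulting $\omega=0$ is positive but cannot satisfy the coercivity conclusion unless $x_0=0$. This is not a removable technicality: in infinite dimensions a boundary point of a closed bounded convex set need not be a support point at all (Bishop--Phelps gives only density of support points). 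Concretely, in $\ell^2$ the compact, convex, balanced ellipsoid $E=\{x\colon\sum_n n^2|x_n|^2\le 1\}$ has points $x$ with $\sum_n n^2|x_n|^2=1$ (hence $x\notin tE$ for $t<1$) but $\sum_n n^4|x_n|^2=\infty$, and no nonzero functional attains its supremum over $E$ at such a point. Since $K$ may well be exactly this kind of set (by Theorem \ref{main}(i) it is always norm compact), purely convex-geometric separation cannot produce the functional you want; that $x_0$ \emph{is} in fact a support point of $K$ is a consequence of the algebraic construction below, not of convexity theory.

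The second, and decisive, gap is the one you flag yourself: you reduce coercivity to the assertion that $y_0^*$ strongly exposes $x_0$ in $K$, observe that uniform convexity of the genuine $l^p$ norm is incompatible with the renorming $|||w|||=\sup_{\|a\|\le1}\|\phi(a)w\|$, and leave the reconciliation open. That reconciliation is the entire content of the paper's proof, and it is done \emph{without} renorming: choose unitaries $v_n$ with $\|\phi(v_n)x_0\|\to M:=\sup_{u\in\mathcal{U}(\mathcal{A})}\|\phi(u)x_0\|$, let $x_n^*$ be norm-one functionals with $x_n^*(\phi(v_n)x_0)=\|\phi(v_n)x_0\|$, and take $y_0^*$ to be a weak limit of the pullbacks $x_n^*\circ\phi(v_n)$ (reflexivity of $(l^p(J))^*$). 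Then $\omega(1)=M$, and Russo--Dye (Lemma \ref{russodye}) gives both $\|\omega\|\le M$ (hence positivity) and the crucial equality $\sup_{\|b\|\le1}\|\phi(b)x_0\|=M$. Consequently, for a positive contraction $h$ with $\omega(h)$ small (your Cauchy--Schwarz step), the two vectors $\phi(v_n)\phi(1-h)x_0$ and $\phi(v_n)x_0$ both lie in the ball of radius $M=\omega(1)$ of the \emph{original} $l^p$ norm, while testing their sum against $x_n^*$ gives $\liminf_n\|\phi(v_n)\phi(1-h)x_0+\phi(v_n)x_0\|\ge 2\omega(1)-\omega(h)$; Lemma \ref{unifconv} applied in $l^p(J)$ itself then makes $\|\phi(v_n)\phi(h)x_0\|$ small, and $\|\phi(h)x_0\|$ is small because $\phi(v_n)^{-1}=\phi(v_n^*)$ has norm at most $\|\phi\|$. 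In short, the trick is to replace the base point $x_0$ by almost-maximal points of the unitary orbit and pull back their norming functionals: at almost-maximal-norm points, support plus uniform convexity does yield the strong exposure you needed. Your polar-decomposition reduction and the estimate $\omega(h)\le\omega(1)^{1/2}\omega(h^2)^{1/2}$ do match the paper's closing steps, but without the orbit-maximization idea the proof is incomplete.
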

\begin{proof}
Let $\mathcal{U}(\mathcal{A})$ be the set of all unitary elements of $\mathcal{A}$. Let $(v_{n})_{n\in\mathbb{N}}$ be a sequence in $\mathcal{U}(\mathcal{A})$ such that
\[\lim_{n\to\infty}\|\phi(v_{n})x_{0}\|=\sup_{u\in\mathcal{U}(\mathcal{A})}\|\phi(u)x_{0}\|.\]
For each $n\in\mathbb{N}$, let $x_{n}^{*}$ be a bounded linear functional on $l^{p}(J)$ such that $\|x_{n}^{*}\|=1$ and $x_{n}^{*}(\phi(v_{n})x_{0})=\|\phi(v_{n})x_{0}\|$. Then $x_{n}^{*}\circ\phi(v_{n})$ is a bounded sequence in $(l^{p}(J))^{*}$. Passing to a subsequence, we may assume that $x_{n}^{*}\circ\phi(v_{n})$ converges weakly to a bounded linear functional $y_{0}^{*}\in(l^{p}(J))^{*}$ as $n\to\infty$. Thus, $\omega\colon\mathcal{A}\to\mathbb{C}$,
\[\omega(a)=y_{0}^{*}(\phi(a)x_{0})=\lim_{n\to\infty}x_{n}^{*}(\phi(v_{n}a)x_{0}),\]
for $a\in\mathcal{A}$, defines a bounded linear functional on $\mathcal{A}$. Note that
\[\omega(1)=\lim_{n\to\infty}x_{n}^{*}(\phi(v_{n})x_{0})=\lim_{n\to\infty}\|\phi(v_{n})x_{0}\|=\sup_{u\in\mathcal{U}(\mathcal{A})}
\|\phi(u)x_{0}\|,\]
and for every $u_{0}\in\mathcal{U}(\mathcal{A})$,
\[|\omega(u_{0})|=\lim_{n\to\infty}|x_{n}^{*}(\phi(v_{n}u_{0})x_{0})|\leq\sup_{u\in\mathcal{U}(\mathcal{A})}
\|\phi(u)x_{0}\|.\]
So by Lemma \ref{russodye}, we have $\|\omega\|\leq\sup_{u\in\mathcal{U}(\mathcal{A})}\|\phi(u)x_{0}\|$. Thus, $\omega(1)=\|\omega\|$ and hence $\omega$ is a positive linear functional.

By contradiction, suppose that there are $\epsilon>0$ and a sequence $(a_{k})_{k\in\mathbb{N}}$ in $\mathcal{A}$ such that $\|a_{k}\|\leq 1$ and $\|\phi(a_{k})x_{0}\|\geq\epsilon$ for all $k\in\mathbb{N}$ and $\omega(a_{k}^{*}a_{k})\to 0$ as $k\to\infty$. We have
\[\|a_{k}\|\geq\frac{\|\phi(a_{k})x_{0}\|}{\|\phi\|\|x_{0}\|}\geq\frac{\epsilon}{\|\phi\|\|x_{0}\|},\]
for all $k\in\mathbb{N}$. For $k\in\mathbb{N}$, let $b_{k}=\frac{a_{k}}{\|a_{k}\|}$. We have $\|b_{k}\|=1$ and $\|\phi(b_{k})x_{0}\|\geq\epsilon$ for all $k\in\mathbb{N}$ and $\omega(b_{k}^{*}b_{k})\to 0$ as $k\to\infty$.

Since $\|x_{n}^{*}\|=1$,
\begin{align*}
&\liminf_{n\to\infty}\|\phi(v_{n})\phi(1-|b_{k}|)x_{0}+\phi(v_{n})x_{0}\|\\
\geq&\liminf_{n\to\infty}[x_{n}^{*}(\phi(v_{n})\phi(1-|b_{k}|)x_{0})+x_{n}^{*}(\phi(v_{n})x_{0})]\\
=&\omega(1-|b_{k}|)+\omega(1)=2\omega(1)-\omega(|b_{k}|).
\end{align*}
Thus,
\[\liminf_{n\to\infty}\|\phi(v_{n})\phi(1-|b_{k}|)x_{0}+\phi(v_{n})x_{0}\|\geq2\omega(1)-\omega(|b_{k}|).\]
But
\[\|\phi(v_{n})\phi(1-|b_{k}|)x_{0}\|\leq\sup_{b\in\mathcal{A},\,\|b\|\leq 1}\|\phi(b)x_{0}\|\|1-|b_{k}|\|\leq
\sup_{u\in\mathcal{U}(\mathcal{A})}\|\phi(u)x_{0}\|=\omega(1)\]
and $\|\phi(v_{n})x_{0}\|\leq\omega(1)$ for all $n\in\mathbb{N}$. Take
\[x=\frac{1}{\omega(1)}\phi(v_{n})\phi(|b_{k}|)x_{0}\text{ and }y=\frac{1}{\omega(1)}\phi(v_{n})x_{0}\]
in Lemma \ref{unifconv} and note that $\omega(|b_{k}|)\leq\omega(b_{k}^{*}b_{k})^{\frac{1}{2}}\omega(1)^{\frac{1}{2}}\to 0$ as $k\to\infty$. We have
\[\lim_{k\to\infty}\limsup_{n\to\infty}\left\|\phi(v_{n})\phi(1-|b_{k}|)x_{0}-\phi(v_{n})x_{0}\right\|=0.\]
Thus,
\[\lim_{k\to\infty}\limsup_{n\to\infty}\|\phi(v_{n})\phi(|b_{k}|)x_{0}\|=0.\]
So $\|\phi(|b_{k}|)x_{0}\|\to 0$ as $k\to\infty$. Since $b_{k}=b_{k}(|b_{k}|+\frac{1}{k})^{-1}(|b_{k}|+\frac{1}{k})$ and $\|b_{k}(|b_{k}|+\frac{1}{k})^{-1}\|\leq 1$, it follows that $\|\phi(b_{k})x_{0}\|\to 0$ as $k\to\infty$ which contradicts with $\|\phi(b_{k})x_{0}\|\geq\epsilon$.
\end{proof}
\begin{proof}[Proof of Theorem \ref{main}(i)]
Without loss generality, we may assume that $\mathcal{A}$ is unital by extending $\phi$ to a homomorphism from the unitization of $\mathcal{A}$ into $B(l^{p}(J))$. We may also assume that $\phi$ is unital since $\phi(1)$ is an idempotent on $l^{p}(J)$ and the range of every idempotent on $l^{p}(J)$ is isomorphic to $l^{p}(J_{0})$ for some set $J_{0}$ \cite{Pelczynski}, \cite{Johnson}.

Let $x_{0}\in l^{p}$. Let $(a_{k})_{k\in\mathbb{N}}$ be a sequence in $\mathcal{A}$ such that $\|a_{k}\|\leq\frac{1}{2}$ for all $k\in\mathbb{N}$. We need to show that $(\phi(a_{k})x_{0})_{k\in\mathbb{N}}$ has a norm convergent subsequence.

Case 1: $p>2$

Passing to a subsequence, we may assume that $(\phi(a_{k})x_{0})_{k\in\mathbb{N}}$ converges weakly to an element of $l^{p}(J)$. Thus, $\phi(a_{k_{1}}-a_{k_{2}})x_{0}\to 0$ weakly as $k_{1},k_{2}\to\infty$.

By Lemma \ref{premain1}, we have $\lim_{k_{1},k_{2}\to\infty}\omega((a_{k_{1}}-a_{k_{2}})^{*}(a_{k_{1}}-a_{k_{2}}))=0$ for every positive linear functional $\omega\colon\mathcal{A}\to\mathbb{C}$ of the form $\omega(a)=y_{0}^{*}(\phi(a)x_{0})$ for $a\in\mathcal{A}$. By Lemma \ref{omega}, we have $\lim_{k_{1},k_{2}\to\infty}\|\phi(a_{k_{1}}-a_{k_{2}})x_{0}\|=0$. So $(\phi(a_{k})x_{0})_{k\in\mathbb{N}}$ is norm convergent.

Case 2: $p<2$

Passing to a subsequence, we may assume that $(y_{0}^{*}\circ\phi(a_{k}^{*}))_{k\in\mathbb{N}}$ converges weakly to an element of $(l^{p}(J))^{*}$. Thus, $y^{*}\circ\phi(a_{k_{1}}^{*}-a_{k_{2}}^{*})\to 0$ weakly as $k_{1},k_{2}\to\infty$ for every $y^{*}\in(l^{p}(J))^{*}$.

By Lemma \ref{premain2}, we have $\lim_{k\to\infty}\omega((a_{k_{1}}^{*}-a_{k_{2}}^{*})(a_{k_{1}}^{*}-a_{k_{2}}^{*})^{*})=0$ for every positive linear functional $\omega\colon\mathcal{A}\to\mathbb{C}$ of the form $\omega(a)=y_{0}^{*}(\phi(a)x_{0})$ for $a\in\mathcal{A}$. By Lemma \ref{omega}, we have $\lim_{k_{1},k_{2}\to\infty}\|\phi(a_{k_{1}}-a_{k_{2}})x_{0}\|=0$. So $(\phi(a_{k})x_{0})_{k\in\mathbb{N}}$ is norm convergent.
\end{proof}
\begin{lemma}[\cite{Kerr}, Theorem 2.24]\label{cdecomp}
Let $G$ be a group. Let $\mathcal{H}$ be a Hilbert space. Let $\varphi\colon G\to B(\mathcal{H})$ be a unital homomorphism such that $\varphi(g)$ is unitary for all $g\in G$. If $\{\varphi(g)x\colon g\in G\}$ is norm precompact in $\mathcal{H}$ for all $x\in\mathcal{H}$, then $\mathcal{H}$ is the direct sum of some finite dimensional subspaces $\mathcal{H}_{\alpha}$, for $\alpha\in\Lambda$, such that $\mathcal{H}_{\alpha}$ is invariant under $\varphi(g)$ for all $\alpha\in\Lambda$ and $g\in G$.
\end{lemma}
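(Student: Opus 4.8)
The plan is to recognize $\varphi$ as the restriction of a strongly continuous unitary representation of a \emph{compact} group and then invoke the Peter--Weyl theorem. Write $\mathcal{U}(\mathcal{H})$ for the group of unitaries on $\mathcal{H}$ equipped with the strong operator topology, and let $G_{0}$ be the closure of $\varphi(G)$ in $\mathcal{U}(\mathcal{H})$ with respect to that topology. The first and main step is to show that $G_{0}$ is a compact topological group. For compactness I would embed $\mathcal{U}(\mathcal{H})$ into the product $\prod_{x\in\mathcal{H}}\overline{\{\varphi(g)x : g\in G\}}$ via $u\mapsto (ux)_{x}$; each factor is norm compact by hypothesis, so the product is compact by Tychonoff, and the strong operator topology on $G_{0}$ agrees with the topology induced by this embedding. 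One must verify that a strong-operator limit of elements of $\varphi(G)$ is again unitary, rather than merely isometric or contractive — this uses the precompactness of the orbits of both $\varphi(g)$ and $\varphi(g)^{*}=\varphi(g^{-1})$ — and that multiplication and inversion are continuous on $G_{0}$; the latter is routine because these operations are jointly strongly continuous on bounded sets of operators, and $G_{0}$ consists of unitaries.

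Once $G_{0}$ is known to be a compact group, the inclusion $G_{0}\hookrightarrow B(\mathcal{H})$ is by construction a strongly continuous unitary representation of $G_{0}$ on $\mathcal{H}$. The second step is to apply the Peter--Weyl theorem in its general form: every strongly continuous unitary representation of a compact group on a Hilbert space decomposes as an orthogonal Hilbert-space direct sum $\mathcal{H}=\bigoplus_{\alpha\in\Lambda}\mathcal{H}_{\alpha}$ of finite-dimensional $G_{0}$-invariant (indeed irreducible) subspaces. Since $\varphi(G)\subseteq G_{0}$, each $\mathcal{H}_{\alpha}$ is automatically invariant under every $\varphi(g)$, which is exactly the asserted decomposition.

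I expect the only real obstacle to be the first step, and within it the verification that $G_{0}$ is genuinely compact and that its limit elements are unitary; the precompactness hypothesis is used precisely here, and it is essential that it is assumed for every $x\in\mathcal{H}$, so that the Tychonoff embedding controls all coordinates simultaneously. A more hands-on alternative, which avoids quoting Peter--Weyl for arbitrary compact groups, is to produce directly a nonzero $\varphi(G)$-commuting compact operator by setting $T=\int_{G_{0}}uPu^{*}\,du$, where $P$ is a rank-one projection and $du$ is the Haar measure of $G_{0}$: invariance of Haar measure makes $T$ positive and commuting with $G_{0}$, while $\operatorname{tr}(T)=\int_{G_{0}}\operatorname{tr}(uPu^{*})\,du=1$ forces $T$ to be trace class, hence compact and nonzero. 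One then peels off the finite-dimensional spectral projections of $T$ as invariant subspaces and exhausts $\mathcal{H}$ by a Zorn's lemma argument applied to orthogonal complements. This route still relies on $G_{0}$ to supply an invariant mean, so the compactness step remains the crux in either approach.
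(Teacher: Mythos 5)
The paper offers no proof of this lemma at all: it is quoted as Theorem 2.24 of the Kerr--Li book, so there is no internal argument to compare yours against; what you have done is supply a proof of the black box, and your proof is correct --- indeed it is essentially the standard argument for this classical fact. Two points in your sketch deserve more care, though both are repairable in a few lines. First, unitarity of a pointwise limit $T=\lim_{i}\varphi(g_{i})$: you flag this correctly, and the clean execution is to use compactness of the product to pass to a subnet along which $\varphi(g_{i}^{-1})$ also converges pointwise to some isometry $S$; then
$\|TSx-x\|\leq\|TSx-\varphi(g_{i})Sx\|+\|Sx-\varphi(g_{i}^{-1})x\|\to 0$,
and symmetrically $ST=I$, so $T$ is a surjective isometry, hence unitary. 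Second, continuity of inversion on $G_{0}$: your stated reason (``jointly strongly continuous on bounded sets'') is valid for multiplication but \emph{false} for the adjoint map, which is not SOT-continuous even on the unit ball of $B(\mathcal{H})$ (powers of the unilateral shift give a counterexample); on the unitary group, however, inversion is SOT-continuous by the isometry trick $\|u_{i}^{*}x-u^{*}x\|=\|x-u_{i}u^{*}x\|\to 0$, which is what you should cite instead. With those repairs, the Tychonoff embedding $u\mapsto(ux)_{x\in\mathcal{H}}$ into the product of the compact orbit closures makes $G_{0}$ a compact topological group, and Peter--Weyl applied to its tautological representation gives exactly the asserted decomposition. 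Your alternative route via $T=\int_{G_{0}}uPu^{*}\,du$ is also sound, trading the full Peter--Weyl theorem for Haar measure plus a Zorn exhaustion by finite-dimensional spectral subspaces, but as you observe it still rests on the compactness of $G_{0}$, so it economizes nothing at the crux; either version would serve as a self-contained substitute for the citation.
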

\begin{proof}[Proof of Theorem \ref{main}(ii)]
As in the proof Theorem \ref{main}(i), we may assume that $\mathcal{A}$ is unital and $\phi$ is unital. We may also assume that $\text{ker }\phi=\{0\}$. Let $a_{0}\neq 0$. There exists $x_{0}\in l^{p}(J)$ such that $\phi(a_{0})x_{0}\neq 0$. By Lemma \ref{omega}, there exists $y_{0}^{*}\in(l^{p}(J))^{*}$ such that $\omega\colon\mathcal{A}\to\mathbb{C}$,
\[\omega(a)=y_{0}^{*}(\phi(a)x_{0}),\]
for $a\in\mathcal{A}$, defines a positive linear functional and $\omega(a_{0}^{*}a_{0})\neq 0$.

Equip $\mathcal{A}$ with the positive semidefinite sesquilinear form
\[\langle a,b\rangle=\omega(b^{*}a),\]
for $a,b\in\mathcal{A}$. Consider the ideal $\mathcal{A}_{0}=\{a\in\mathcal{A}\colon\langle a,a\rangle=0\}$ of $\mathcal{A}$. Let $\mathcal{H}$ be the completion of the quotient space $\mathcal{A}/\mathcal{A}_{0}$. Then $\mathcal{H}$ is a Hilbert space. For each $a\in\mathcal{A}$, we can define a bounded linear operator on $\mathcal{H}$ by sending $b+\mathcal{A}_{0}$ to $ab+\mathcal{A}_{0}$ for $b\in\mathcal{A}$. So $\eta\colon\mathcal{A}\to B(\mathcal{H})$,
\[\eta(a)(b+\mathcal{A}_{0})=ab+\mathcal{A}_{0},\]
for $a,b\in\mathcal{A}$, defines a unital $*$-homomorphism. We have
\begin{eqnarray*}
\|\eta(a_{1})(b+\mathcal{A}_{0})-\eta(a_{2})(b+\mathcal{A}_{0})\|&=&\omega(b^{*}(a_{1}-a_{2})^{*}(a_{1}-a_{2})b)\\&=&
y_{0}^{*}(\phi(b^{*}(a_{1}-a_{2})^{*}(a_{1}-a_{2})b)x_{0})\\&\leq&
\|y_{0}^{*}\|\|\phi\|\|b^{*}\|\|a_{1}-a_{2}\|\|\phi(a_{1}-a_{2})\phi(b)x_{0}\|,
\end{eqnarray*}
for all $a_{1},a_{2},b\in\mathcal{A}$. By Theorem \ref{main}(i), we have that $\{\phi(a)x_{0}\colon a\in\mathcal{A},\,\|a\|\leq 1\}$ is norm precompact so $\{\eta(a)(b+\mathcal{A}_{0})\colon a\in\mathcal{A},\,\|a\|\leq 1\}$ is norm precompact for all $b\in\mathcal{A}$. Let $\mathcal{U}(\mathcal{A})$ be the set of all unitary elements of $\mathcal{A}$. By Lemma \ref{cdecomp}, we have that $\mathcal{H}$ is the direct sum of some finite dimensional subspaces $\mathcal{H}_{\alpha}$, for $\alpha\in\Lambda$, such that $\mathcal{H}_{\alpha}$ is invariant under $\eta(u)$ for all $\alpha\in\Lambda$ and $u\in\mathcal{U}(\mathcal{A})$. Note that $\mathcal{H}_{\alpha}$ is thus invariant under $\eta(a)$ for all $a\in\mathcal{A}$.

Since $\omega(a_{0}^{*}a_{0})\neq 0$, we have $\eta(a_{0})\neq 0$. So $\eta(a_{0})\neq 0$ on $\mathcal{H}_{\alpha_{0}}$ for some $\alpha_{0}\in\Lambda$. Thus, $\mathcal{A}$ is residually finite dimensional.
\end{proof}
\begin{proof}[Proof of Corollary \ref{lprfd}]
One direction follows from Theorem \ref{main}. For the other direction, suppose that $\mathcal{A}$ is a residually finite dimensional $C^{*}$-algebra. Then there is a collection $(\phi_{\alpha})_{\alpha\in\Lambda}$ of $*$-representations of $\mathcal{A}$ on finite dimensional Hilbert spaces $\mathcal{H}_{\alpha}$ such that $\|a\|=\sup_{\alpha\in\Lambda}\|\phi_{\alpha}(a)\|$ for all $a\in\mathcal{A}$. Define $\phi\colon\mathcal{A}\to B((\oplus_{\alpha\in\Lambda}\mathcal{H}_{\alpha})_{l^{p}})$ by $\phi=\oplus_{\alpha\in\Lambda}\phi_{\alpha}$. Thus $\phi$ is a norm preserving homomorphism. However, it is a classical result of Pe{\l}czy\'nski \cite{Pelczynski} that for $1<p<\infty$, the $l^{p}$ direct sum of finite dimensional Hilbert spaces is isomorphic to $l^{p}(J)$ for some set $J$. Therefore, $\mathcal{A}$ is isomorphic to a subalgebra of $B(l^{p}(J))$, via the map $a\mapsto S\phi(a)S^{-1}$ where $S\colon(\oplus_{\alpha\in\Lambda}\mathcal{H}_{\alpha})_{l^{p}}\to l^{p}(J)$ is any invertible operator.
\end{proof}
{\bf Acknowledgements:} The author is grateful to the referee for some suggestions that improved the exposition. The author is supported by NSF DMS-1856221.

\end{document}